\theoremstyle{plain}
\newtheorem{thm}{Theorem}
\newtheorem{lem}{Lemma}
\theoremstyle{remark}
\newtheorem{rem}{Remark}
\DeclareMathOperator{\td}{d\mspace{-2mu}}
\date{Complete on 3 January 2010 in Tianjin Polytechnic University}
\date{}
\begin{document}

\title[Two monotonic functions involving gamma function]
{Two monotonic functions involving gamma function and volume of unit ball}

\author[F. Qi]{Feng Qi}
\address[F. Qi]{Department of Mathematics, College of Science, Tianjin Polytechnic University, Tianjin City, 300160, China}
\email{\href{mailto: F. Qi <qifeng618@gmail.com>}{qifeng618@gmail.com}, \href{mailto: F. Qi <qifeng618@hotmail.com>}{qifeng618@hotmail.com}, \href{mailto: F. Qi <qifeng618@qq.com>}{qifeng618@qq.com}}
\urladdr{\url{http://qifeng618.spaces.live.com}}

\author[B.-N. Guo]{Bai-Ni Guo}
\address[B.-N. Guo]{School of Mathematics and Informatics, Henan Polytechnic University, Jiaozuo City, Henan Province, 454010, China}
\email{\href{mailto: B.-N. Guo <bai.ni.guo@gmail.com>}{bai.ni.guo@gmail.com},
\href{mailto: B.-N. Guo <bai.ni.guo@hotmail.com>}{bai.ni.guo@hotmail.com}}

\begin{abstract}
In present paper, we prove the monotonicity of two functions involving the gamma function $\Gamma(x)$ and relating to the $n$-dimensional volume of the unit ball $\mathbb{B}^n$ in $\mathbb{R}^n$.
\end{abstract}

\keywords{monotonic function, gamma function, volume, unit ball, Descartes' Sign Rule}

\subjclass[2000]{Primary 33B15; Secondary 26A48}

\thanks{The authors were supported in part by the Science Foundation of Tianjin Polytechnic University}

\thanks{This paper was typeset using \AmS-\LaTeX}

\maketitle

\section{Introduction}

It is well-known that the classical Euler's gamma function may be defined by
\begin{equation}\label{egamma}
\Gamma(x)=\int^\infty_0t^{x-1} e^{-t}\td t
\end{equation}
for $x>0$ and that the $n$-dimensional volume of the unit ball $\mathbb{B}^n$ in $\mathbb{R}^n$ is denoted by
\begin{equation}
\Omega_n=\frac{\pi^{n/2}}{\Gamma(1+n/2)},\quad n\in\mathbb{N}.
\end{equation}
\par
For $x\ge0$, define the function
\begin{equation}\label{gamma-ln-ratio}
F(x)=\begin{cases}
\dfrac{\ln\Gamma(x+1)}{\ln(x^2+1)-\ln(x+1)},&x\ne0,1,\\
\gamma,&x=0,\\
2(1-\gamma),&x=1.
\end{cases}
\end{equation}
Recently, the function $F(x)$ was proved in \cite{refine-Ivady-gamma.tex} to be strictly increasing on $[0,1]$. Moreover, as a remark in \cite{refine-Ivady-gamma.tex}, the function $F(x)$ was also conjectured to be strictly increasing on $(1,\infty)$.
\par
The first aim of this paper is to verify above-mentioned conjecture which can be recited as the following theorem.

\begin{thm}\label{mon-gamma-function-thm}
The function $F(x)$ defined by \eqref{gamma-ln-ratio} is strictly increasing on $[0,\infty)$.
\end{thm}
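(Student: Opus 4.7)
The plan is to leverage the known monotonicity on $[0,1]$ from \cite{refine-Ivady-gamma.tex} and to prove strict monotonicity on $[1,\infty)$ separately; the definition $F(1)=2(1-\gamma)$ already agrees with the common limit $\psi(2)/g'(1)$ (writing $g(x)=\ln(x^2+1)-\ln(x+1)$) by l'Hospital's rule, so $F$ is continuous at $1$ and the two pieces will glue into a strictly increasing function on $[0,\infty)$.

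Set $F=u/v$ with $u(x)=\ln\Gamma(x+1)$ and $v(x)=g(x)$. Both vanish at $x=1$, and
\[
v'(x)=\frac{x^2+2x-1}{(x^2+1)(x+1)}>0 \quad\text{for }x>\sqrt{2}-1,
\]
hence in particular on $[1,\infty)$. By the monotone form of l'Hospital's rule (Anderson--Vamanamurthy--Vuorinen, Pinelis), it therefore suffices to show that
\[
h(x)=\frac{u'(x)}{v'(x)}=\frac{\psi(x+1)(x^2+1)(x+1)}{x^2+2x-1}
\]
is strictly increasing on $[1,\infty)$.

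A direct computation of $h'(x)$ shows that its sign coincides with that of
\[
N(x)=\psi'(x+1)(x^2+2x-1)(x^2+1)(x+1)+\psi(x+1)\,p(x),
\]
where $p(x)=x^4+4x^3-2x^2-4x-3$. Descartes' Sign Rule applied to the coefficient sequence $(1,4,-2,-4,-3)$ exhibits exactly one sign change, so $p$ has precisely one positive real root $x_1$; since $p(1)=-4$ and $p(2)=29$, we have $x_1\in(1,2)$. On $[x_1,\infty)$ both summands of $N(x)$ are nonnegative and the first is strictly positive (using $\psi'>0$ on $(0,\infty)$ and $\psi(x+1)>0$ for $x\ge 1$), so $N(x)>0$ there is immediate.

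The main obstacle is the narrow interval $(1,x_1)$, where $p(x)<0$ and the positive first summand must dominate $|\psi(x+1)\,p(x)|$. My plan here is to invoke standard lower bounds on $\psi'(x+1)$ and upper bounds on $\psi(x+1)$---for instance $\psi'(x+1)\ge \frac{1}{x+1}+\frac{1}{2(x+1)^2}$ and $\psi(x+1)\le \ln(x+1)$ valid throughout $[1,2]$---which reduce $N(x)\ge 0$ on $[1,x_1]$ to a rational inequality whose numerator, after clearing denominators, is a polynomial once again amenable to Descartes' Sign Rule. Combining the two regimes yields $h'(x)>0$ on $(1,\infty)$, and the monotone l'Hospital rule then completes the proof of strict monotonicity of $F$ on $[1,\infty)$ and hence, together with the cited $[0,1]$ result, on $[0,\infty)$.
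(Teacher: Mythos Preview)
Your framework matches the paper's: both invoke the monotone l'Hospital rule on $[1,\infty)$ with numerator $\ln\Gamma(x+1)$ and denominator $\ln\frac{x^2+1}{x+1}$ (both vanishing at $x=1$), and both reduce the problem to proving that
\[
q(x)=(x^4+4x^3-2x^2-4x-3)\,\psi(x+1)+(x+1)(x^2+1)(x^2+2x-1)\,\psi'(x+1)
\]
is positive on $[1,\infty)$. The paper, however, does not split on the sign of your $p(x)$. Instead it differentiates once more and bounds $\psi$, $\psi'$, $\psi''$ by the standard two-sided estimates \eqref{qi-psi-ineq-1}--\eqref{qi-psi-ineq} together with $\ln(1+t)\ge\frac{2t}{2+t}$, reducing $q'(x)>0$ to a pure polynomial inequality (verified by expanding in powers of $x-1$); then $q(1)=8(\pi^2/6-1)-4(1-\gamma)>0$ finishes. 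Your case split is a reasonable alternative that avoids bringing $\psi''$ into play.

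There is, though, a genuine gap in your plan on $(1,x_1)$. Substituting $\psi'(x+1)\ge\frac{1}{x+1}+\frac{1}{2(x+1)^2}$ and $\psi(x+1)\le\ln(x+1)$ does \emph{not} give a rational inequality: the term $p(x)\ln(x+1)$ survives, so Descartes' rule cannot be applied at that stage. You need one additional rational \emph{upper} bound on $\ln(x+1)$ (upper, because $p(x)<0$ on this interval), for example $\ln(1+x)\le\frac{x(x+2)}{2(x+1)}$---precisely the companion of the logarithm bound the paper uses in \eqref{log-ineq-qi}. With that extra step the resulting numerator is a polynomial with a single sign change and positive value at $x=1$, so your argument can be completed; but as stated the reduction to a polynomial is incomplete.
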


The second aim of this paper is to derive the monotonicity of the sequence
\begin{equation}\label{omega-ln-x-2-x}
\Omega_n^{1/[\ln(n^2/4+1)-\ln(n/2+1)]}
\end{equation}
for $n\in\mathbb{N}$ by establishing the following general conclusion.

\begin{thm}\label{mon-unit-ball-thm}
The function
\begin{equation}\label{mon-unit-ball-G(x)}
  G(x)=\biggl[\frac{\pi^{x}}{\Gamma(x+1)}\biggr]^{1/[\ln(x^2+1)-\ln(x+1)]}
\end{equation}
is strictly decreasing on $(1,\infty)$. Consequently, the sequence \eqref{omega-ln-x-2-x} is strictly decreasing for $n\ge3$.
\end{thm}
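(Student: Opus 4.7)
My plan is to reduce Theorem~\ref{mon-unit-ball-thm} to the strict decrease of $\ln G$. Since $B(x):=\ln(x^2+1)-\ln(x+1)>0$ for $x>1$ (because $x^2>x$ there), $G$ is strictly decreasing on $(1,\infty)$ if and only if
\[
L(x):=\ln G(x)=\frac{x\ln\pi-\ln\Gamma(x+1)}{B(x)}
\]
is strictly decreasing on $(1,\infty)$. Once this is established, substituting $x=n/2$ for $n\ge3$ yields the strict decrease of the sequence \eqref{omega-ln-x-2-x}.

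The natural decomposition
\[
L(x)=\ln\pi\cdot\frac{x}{B(x)}-F(x)
\]
brings Theorem~\ref{mon-gamma-function-thm} directly into play: differentiating gives
\[
L'(x)=\ln\pi\cdot\left(\frac{x}{B(x)}\right)'-F'(x),
\]
and Theorem~\ref{mon-gamma-function-thm} supplies $F'\ge0$ on $(0,\infty)$. Setting $M(x):=x/B(x)$, we have $M(x)\to+\infty$ both as $x\to1^+$ (since $B(1)=0$) and as $x\to\infty$ (since $B(x)\sim\ln x$), so $M$ attains a minimum at some $x_0>1$; on $(1,x_0)$ we have $M'(x)<0$ and the inequality $L'(x)<0$ follows immediately.

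The substantive part concerns $x\ge x_0$, where $M'(x)\ge0$. Equivalently, clearing the positive factor $B(x)^2$, one must prove
\[
\bigl[\ln\pi-\psi(x+1)\bigr]B(x)<\bigl[x\ln\pi-\ln\Gamma(x+1)\bigr]B'(x),
\]
with $B'(x)=(x^2+2x-1)/[(x^2+1)(x+1)]$. I would multiply through by $(x+1)(x^2+1)>0$ and reorganize the result into a single auxiliary function, whose sign I then try to control via integral or series representations of $\psi$ and $\ln\Gamma$ or via sharp classical estimates. The main obstacle is producing a genuinely quantitative bound in this transition regime: asymptotically the two sides behave like $-(\ln x)^2$ and $-\ln x$ respectively, so the inequality widens for large $x$, but a delicate estimate will be required near $x_0$. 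A Descartes-type sign analysis, in the spirit of the proof of Theorem~\ref{mon-gamma-function-thm}, may be useful once the transcendental terms are suitably controlled.
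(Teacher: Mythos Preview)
Your proposal is not a proof; it is an outline with an explicitly acknowledged gap, and that gap is the whole problem. The decomposition $L=\ln\pi\cdot M-F$ together with Theorem~\ref{mon-gamma-function-thm} does dispose of the interval $(1,x_0)$ where $M'<0$, but on $[x_0,\infty)$ the inequality you still need,
\[
\bigl[\ln\pi-\psi(x+1)\bigr]B(x)<\bigl[x\ln\pi-\ln\Gamma(x+1)\bigr]B'(x),
\]
is precisely the statement $g(x)<0$ that a direct computation of $[\ln G(x)]'$ produces on \emph{all} of $(1,\infty)$. In other words, after your reduction you are left with the same inequality, merely on a slightly smaller (and numerically awkward, since $x_0\approx 3.5$) domain; nothing is saved, and you no longer have the clean endpoint $x=1$ to anchor a monotonicity argument.

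The paper's proof does exactly the direct computation you would have to do anyway. It factors $g(x)=\dfrac{x^2+2x-1}{(x+1)(x^2+1)}\,h(x)$, differentiates to obtain
\[
h'(x)=\frac{\ln(x+1)-\ln(x^2+1)}{(x^2+2x-1)^2}\,h_1(x),
\]
and then shows $h_1(x)>0$ on $(1,\infty)$ by proving $h_1'(x)>0$ via the standard bounds \eqref{qi-psi-ineq-1}, \eqref{qi-psi-ineq}, \eqref{log-ineq-qi} and a Descartes' Sign Rule argument (Lemma~\ref{0<>0}), together with the value $h_1(1)>0$. Since the logarithmic factor is negative, $h'<0$; since $h(1)=-\ln\pi<0$, one gets $h<0$, hence $g<0$, hence $[\ln G]'<0$. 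This is the ``delicate estimate'' you anticipate, and it is carried out entirely by repeated differentiation, polygamma inequalities, and sign-tracking from the boundary value at $x=1$; your route through $M$ and $F$ would force you to replace that convenient anchor by the implicitly defined $x_0$, which makes the analysis harder rather than easier.
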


\section{Two lemmas}

In order to prove Theorem~\ref{mon-gamma-function-thm}, we need the following lemma which can be found in \cite[pp.~9--10, Lemma~2.9]{refine-jordan-kober.tex}, \cite[p.~71, Lemma~1]{elliptic-mean-comparison-rev2.tex} or closely-related references therein.

\begin{lem}\label{hospital-rule-ratio}
Let $f$ and $g$ be continuous on $[a,b]$ and differentiable on $(a,b)$ such
that $g'(x)\ne0$ on $(a,b)$. If $\frac{f'(x)}{g'(x)}$ is increasing $($or
decreasing$)$ on $(a,b)$, then so are the functions $\frac{f(x)-f(b)}{g(x)-g(b)}$ and
$\frac{f(x)-f(a)}{g(x)-g(a)}$ on $(a,b)$.
\end{lem}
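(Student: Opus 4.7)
The plan is to reduce the monotonicity to Cauchy's mean value theorem plus a sign check. First, since $g'$ is never zero on $(a,b)$, by Darboux's theorem $g'$ has constant sign there, so $g$ is strictly monotonic on $[a,b]$. In particular $g(x)-g(a)$ is nonzero on $(a,b]$ and has the same sign as $g'(x)$, so the product $g'(x)[g(x)-g(a)]$ is strictly positive on $(a,b)$; analogously $g'(x)[g(x)-g(b)]$ is strictly negative. The two ratios in the statement are therefore well-defined and differentiable on $(a,b)$.

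Next I would work with $H(x):=(f(x)-f(a))/(g(x)-g(a))$. A direct differentiation yields
\[
H'(x)=\frac{g'(x)}{g(x)-g(a)}\left[\frac{f'(x)}{g'(x)}-\frac{f(x)-f(a)}{g(x)-g(a)}\right].
\]
By Cauchy's mean value theorem applied to $f$ and $g$ on $[a,x]$, there is $c=c(x)\in(a,x)$ with $(f(x)-f(a))/(g(x)-g(a))=f'(c)/g'(c)$. The bracket above becomes $f'(x)/g'(x)-f'(c)/g'(c)$, and since $c<x$, its sign matches the direction of monotonicity assumed for $f'/g'$. The prefactor is strictly positive by the opening paragraph, so $H'(x)$ has the desired constant sign on $(a,b)$ and $H$ has the asserted monotonicity.

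The other ratio $K(x):=(f(x)-f(b))/(g(x)-g(b))$ follows the same template, with Cauchy's mean value theorem now applied on $[x,b]$ to produce $c\in(x,b)$. The derivative factors as $[g'(x)/(g(x)-g(b))]\,[f'(x)/g'(x)-f'(c)/g'(c)]$; this time the prefactor is negative and, because $c>x$, the bracket has sign opposite to the monotonicity direction of $f'/g'$. The two sign reversals cancel, so $K$ inherits exactly the same monotonicity direction as $f'/g'$. The only delicate point in the whole argument is this sign bookkeeping across the two cases (and across the increasing/decreasing alternatives for $g$ and for $f'/g'$); there is no real analytic obstacle, the result being a one-step consequence of Cauchy's theorem.
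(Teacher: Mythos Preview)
Your argument is correct: the Darboux/sign check, the quotient-rule factorization of $H'$ and $K'$, and the application of Cauchy's mean value theorem all go through exactly as you describe, and the sign bookkeeping in the $K$ case is handled properly.

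As for comparison with the paper: the paper does not supply its own proof of this lemma at all, but merely quotes it from the literature (the monotone form of L'Hospital's rule, as in Anderson--Vamanamurthy--Vuorinen and related sources). What you have written is precisely the standard proof that appears in those references, so there is no methodological divergence to discuss---you have simply filled in what the paper leaves to citation.
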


We also need the following elementary conclusions.

\begin{lem}\label{0<>0}
The functions
\begin{align*}
p_1(x)&=x^3+3 x^2-x-1,\\
p_2(x)&=x^3+3x^2-x-1,\\
p_3(x)&=3 x^4+8 x^3+2 x^2-1,\\
p_4(x)&=x^5+3 x^4+2 x^3+2 x^2+x-1,\\
p_5(x)&=x^5+5 x^4+6 x^3-3 x-1,\\
p_6(x)&=120 (15-4 \ln\pi)x^3 +240 (20-7 \ln\pi) x^2\\
  &\quad+48  (59-32 \ln\pi)x+72 (3-4 \ln\pi)
\end{align*}
are positive on $(1,\infty)$.
\end{lem}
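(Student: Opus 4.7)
The plan is to convert the claim ``$p_i>0$ on $(1,\infty)$'' into a statement about positive zeros of a shifted polynomial and then invoke Descartes' Rule of Signs, as suggested by the keyword list. Writing $x=y+1$, positivity of $p_i$ on $(1,\infty)$ becomes positivity of $p_i(y+1)$ on $(0,\infty)$. If the polynomial $p_i(y+1)\in\mathbb{R}[y]$ has only positive coefficients, then Descartes' Rule forbids any positive real root, and since the constant term of $p_i(y+1)$ equals $p_i(1)$, which is obviously positive, this yields $p_i(x)>0$ for $x>1$.

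For the five genuinely polynomial cases $p_1,p_2,p_3,p_4,p_5$ (I note that $p_1$ and $p_2$ are displayed identically, presumably a harmless typographical accident), one simply expands $p_i(y+1)$ by the binomial theorem. Each expansion produces a polynomial in $y$ of low degree with explicit integer coefficients; for instance $p_1(y+1)=y^3+6y^2+8y+2$. In each case I expect every coefficient to come out strictly positive, so the desired conclusion is immediate from the sign-count criterion.

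For $p_6$ the coefficients involve $\ln\pi$, so a little extra care is needed. The cleanest route is to differentiate: $p_6'(x)=360(15-4\ln\pi)x^2+480(20-7\ln\pi)x+48(59-32\ln\pi)$ has all three coefficients positive, because $\ln\pi=1.1447\ldots$ lies well below each of the ratios $\tfrac{15}{4}$, $\tfrac{20}{7}$, $\tfrac{59}{32}$. Hence $p_6$ is strictly increasing on $[0,\infty)$, and it suffices to check the single numerical inequality $p_6(1)>0$, i.e.\ $9648-3984\ln\pi>0$, which follows from the crude bound $\ln\pi<2$. Alternatively, the shift $x\mapsto y+1$ applied to $p_6$ again produces a cubic whose four coefficients are linear combinations of the form $A-B\ln\pi$ with $A/B$ comfortably above $\ln\pi$, so Descartes' Rule applies uniformly to all six polynomials.

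The main obstacle is not conceptual but purely clerical: carrying out the binomial expansions without arithmetic slips and, in the case of $p_6$, keeping track of the $\ln\pi$-dependent coefficients. Every required sign verification reduces to an elementary estimate on $\ln\pi$, so no genuine analytic difficulty arises.
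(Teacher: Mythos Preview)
Your argument is correct. Both you and the paper ultimately rely on Descartes' Sign Rule, but you deploy it differently. The paper applies the rule directly to each $p_i(x)$: since each coefficient sequence has exactly one sign change, each $p_i$ has exactly one positive root, and the observation $p_i(0)<0<p_i(1)$ pins that root inside $(0,1)$, forcing $p_i>0$ on $[1,\infty)$. You instead substitute $x=y+1$ and verify that every coefficient of $p_i(y+1)$ is positive; this gives positivity on $y>0$ outright (indeed trivially, without even needing Descartes). Your route costs a few binomial expansions but avoids the intermediate-value step; the paper's route avoids the expansions but needs the root-location argument. For $p_6$ your derivative argument ($p_6'$ has three positive coefficients since $\ln\pi<59/32$, then $p_6(1)=9648-3984\ln\pi>0$) is a clean alternative to both.
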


\begin{proof}
An easy calculation shows that
\begin{align*}
15-4 \ln\pi&=10.421\dotsm,&20-7 \ln\pi&=11.986\dotsm,\\
59-32 \ln\pi&=22.368\dotsm,&3-4 \ln\pi&=-1.578\dotsm.
\end{align*}
Then Descartes' Sign Rule tells us that the function $p_i(x)$ for $1\le i\le6$ have just one possible positive root. Since
\begin{align*}
p_1(0)&=-1,& p_2(0)&=-1,& p_3(0)&=-1,& p_4(0)&=-1,&p_5(0)&=-1,\\
p_1(1)&=2,& p_2(1)&=2,& p_3(1)&=12,& p_4(1)&=8,&p_5(1)&=8,
\end{align*}
and
\begin{align*}
p_6(0)&=-72 (4 \ln\pi-3)\\
&=-113.68\dotsm,\\
p_6(1)&=-[120 (4 \ln\pi-15)+72 (4 \ln\pi-3)\\
&\quad+240 (7 \ln\pi-20)+48 (32 \ln\pi-59)]\\
&=5087.39\dotsm,
\end{align*}
these functions are positive on $[1,\infty)$.
\end{proof}

\section{Proof of Theorem~\ref{mon-gamma-function-thm}}

The monotonicity of the function $F(x)$ on $[0,1]$ was proved in \cite{refine-Ivady-gamma.tex}.
\par
For $x\in[1,\infty)$, it is easy to see that
\begin{equation}\label{ratio-gamma-ln}
\frac{\ln\Gamma(x+1)}{\ln(x^2+1)-\ln(x+1)} =\frac{\ln\Gamma(x+1)-\ln\Gamma(1+1)}{\ln\frac{x^2+1}{x+1}-\ln\frac{1^2+1}{1+1}} =\frac{f(x)-f(1)}{g(x)-g(1)},
\end{equation}
where
\begin{gather*}
f(x)=\ln{\Gamma(x+1)}\,\quad \text{and}\quad g(x)=\ln{\frac{x^2+1}{x+1}}
\end{gather*}
on $[1,\infty)$. Easy computation and simplification yield
\begin{equation*}
\frac{f'(x)}{g'(x)}=\frac{(x+1) \bigl(x^2+1\bigr) \psi'(x+1)}{x^2+2 x-1}
\end{equation*}
and
\begin{equation*}
\frac{\td}{\td x}\biggl[\frac{f'(x)}{g'(x)}\biggr]=\frac{q(x)}{(x^2+2 x-1)^2},
\end{equation*}
where
\begin{align*}
  q(x)&=\bigl(x^4+4 x^3-2 x^2-4 x-3\bigr) \psi(x+1)\\
  &\quad+(x+1) \bigl(x^2+1\bigr) \bigl(x^2+2 x-1\bigr) \psi'(x+1)
\end{align*}
and
\begin{align*}
q'(x)&=4 \bigl(x^3+3 x^2-x-1\bigr) \psi(x+1)\\
&\quad+\bigl(x^2+2
   x-1\bigr) \bigl[2\bigl(3 x^2+2 x+1\bigr) \psi'(x+1)\\
&\quad+(x+1)\bigl(x^2+1\bigr)\psi''(x+1)\bigr].
\end{align*}
By virtue of
\begin{equation}\label{qi-psi-ineq-1}
\ln x-\frac1x<\psi(x)<\ln x-\frac1{2x}
\end{equation}
and
\begin{equation}\label{qi-psi-ineq}
\frac{(k-1)!}{x^k}+\frac{k!}{2x^{k+1}} <\bigl\vert\psi^{(k)}(x)\bigr\vert <\frac{(k-1)!}{x^k}+\frac{k!}{x^{k+1}}
\end{equation}
for $x>0$ and $n\in\mathbb{N}$, see \cite[p.~131]{subadditive-qi.tex}, \cite[Lemma~3]{theta-new-proof.tex-BKMS}, \cite[p.~79]{AAM-Qi-09-PolyGamma.tex}, \cite[Lemma~3]{subadditive-qi-guo-jcam.tex} or related texts in \cite{Extension-TJM-2003.tex, Sharp-Ineq-Polygamma.tex}, and by using of the positivity of $p_1(x)$ in Lemma~\ref{0<>0} and
\begin{equation}\label{log-ineq-qi}
\frac{2t}{2+t}\le\ln(1+t)\le\frac{t(2+t)}{2(1+t)}
\end{equation}
on $(0,\infty)$, see \cite{Qi-Chen-MIQ-99} or \cite[p.~245, Remark~1]{Mon-Two-Seq-AMEN.tex}, we obtain
\begin{align*}
  q'(x)&>4 \bigl(x^3+3 x^2-x-1\bigr) \biggl[\ln(x+1)-\frac1{x+1}\biggr]\\
&\quad+\bigl(x^2+2
   x-1\bigr) \biggl\{2\bigl(3 x^2+2 x+1\bigr) \biggl[\frac1{x+1}+\frac1{2(x+1)^2}\biggr]\\
&\quad-(x+1)\bigl(x^2+1\bigr)\biggl[\frac1{(x+1)^2}+\frac2{(x+1)^3}\biggr]\biggr\}\\
&=4\bigl(x^3+3 x^2-x-1\bigr)\biggl[\ln(x+1)+\frac{4+x-4 x^2+6 x^3+16 x^4+5 x^5} {4(x+1)^2(x^3+3x^2-x-1)}\biggr]\\
&\ge4\bigl(x^3+3 x^2-x-1\bigr)\biggl[\frac{2x}{x+2}+\frac{4+x-4 x^2+6 x^3+16 x^4+5 x^5} {4(x+1)^2(x^3+3x^2-x-1)}\biggr]\\
&=\frac{13 x^6+66 x^5+86 x^4+8 x^3-31 x^2-2 x+8}{(x+1)^2 (x+2)}
\end{align*}
on $[1,\infty)$. Because
\begin{multline*}
13 x^6+66 x^5+86 x^4+8 x^3-31 x^2-2 x+8= 13 (x-1)^6+144 (x-1)^5\\
+611 (x-1)^4+1272 (x-1)^3+1364 (x-1)^2+712(x-1)+148>0
\end{multline*}
on $[1,\infty)$, it follows that $q'(x)>0$, and so the function $q(x)$ is increasing, on $[1,\infty)$. Due to
$$
q(1)=8 \left(\frac{\pi ^2}{6}-1\right)-4 (1-\gamma )=3.468\dotsm,
$$
the function $q(x)$ is positive on $[1,\infty)$. Therefore,
$$
\frac{\td}{\td x}\biggl[\frac{f'(x)}{g'(x)}\biggr]>0
$$
on $[1,\infty)$, which means that the function $\frac{f'(x)}{g'(x)}$ is strictly increasing on $[1,\infty)$. Furthermore, from Lemma~\ref{hospital-rule-ratio} and the equation \eqref{ratio-gamma-ln}, it follows that the function~\eqref{gamma-ln-ratio} is strictly increasing on $[1,\infty)$. The proof of Theorem~\ref{mon-gamma-function-thm} is complete.

\section{Proof of Theorem~\ref{mon-unit-ball-thm}}
Taking the logarithm of the function $G(x)$ and differentiating yield
\begin{equation*}
  \ln G(x)=\frac{(\ln\pi)x-\ln\Gamma(x+1)}{\ln(x^2+1)-\ln(x+1)}
\end{equation*}
and
\begin{equation*}
[\ln G(x)]'=\frac{g(x)}{[\ln(x+1)-\ln(x^2+1)]^2},
\end{equation*}
where
\begin{align*}
g(x)&=[\ln\pi-\psi(x+1)]\ln\frac{x^2+1}{x+1} -\frac{x^2+2 x-1}{(x+1)(x^2+1)}[(\ln\pi)x-\ln\Gamma(x+1)]\\
&=\frac{x^2+2 x-1}{(x+1)(x^2+1)}\biggl\{\frac{(x+1)(x^2+1)[\ln\pi-\psi(x+1)]}{x^2+2 x-1}\ln\frac{x^2+1}{x+1}\\
&\quad-(\ln\pi)x+\ln\Gamma(x+1)\biggr\}\\
&\triangleq\frac{x^2+2 x-1}{(x+1)(x^2+1)}h(x),
\end{align*}
with
\begin{align*}
h'(x)&=\frac{\ln (x+1)-\ln \bigl(x^2+1\bigr)}{(x^2+2 x-1)^2} \bigl\{\bigl(x^4+4 x^3-2 x^2-4 x-3\bigr)[\psi(x+1)-\ln\pi]\\
&\quad+(x+1)\bigl(x^2+1\bigr) \bigl(x^2+2 x-1\bigr) \psi'(x+1)\bigr\}\\
&\triangleq h_1(x)
\end{align*}
and
\begin{align*}
  h_1'(x)&=4\bigl(x^3+3x^2-x-1\bigr) \psi(x+1)\\
  &\quad+2 \bigl(3 x^4+8 x^3+2 x^2-1\bigr)\psi'(x+1)\\
   &\quad+\bigl(x^5+3 x^4+2 x^3+2 x^2+x-1\bigr) \psi''(x+1)\\
   &\quad-4\bigl(x^3+3x^2-x-1\bigr) \ln\pi.
\end{align*}
Utilizing Lemma~\ref{0<>0} and employing \eqref{qi-psi-ineq-1}, \eqref{qi-psi-ineq} for $k=1$ and \eqref{log-ineq-qi} give
\begin{align*}
  h_1'(x)&>4\bigl(x^3+3x^2-x-1\bigr) \biggl[\ln(x+1)-\frac1{x+1}\biggr]\\
  &\quad+2 \bigl(3 x^4+8 x^3+2 x^2-1\bigr)\biggl[\frac1{x+1}+\frac1{2(x+1)^2}\biggr]\\
   &\quad-\bigl(x^5+3 x^4+2 x^3+2 x^2+x-1\bigr)\biggl[\frac1{(x+1)^2}+\frac2{(x+1)^3}\biggr]\\
   &\quad-4\bigl(x^3+3x^2-x-1\bigr) \ln\pi\\
&=\frac{1}{(x+1)^2}\bigl[(7-4\ln\pi)x^5+(26-20\ln\pi)x^4 -6(4\ln\pi-3) x^3\\
&\quad+(11+12\ln\pi)x-2+4 \ln (\pi )\\
&\quad+4 \bigl(x^5+5 x^4+6 x^3-3 x-1\bigr) \ln (x+1)\bigr]\\
&>\frac{1}{(x+1)^2}\biggl[(7-4\ln\pi)x^5+(26-20\ln\pi)x^4 -6(4\ln\pi-3) x^3\\
&\quad+(11+12\ln\pi)x-2+4 \ln (\pi )\\
&\quad+4 \bigl(x^5+5 x^4+6 x^3-3 x-1\bigr) \frac{2x}{x+2}\biggr]\\
&=-\frac{1}{(x+1)^2 (x+2)}\bigl[(4 \ln\pi-15)x^6 +4 (7 \ln\pi-20)x^5 \\
&\quad+2 (32 \ln\pi-59)x^4 +12(4 \ln\pi-3) x^3 \\
&\quad+(13-12 \ln\pi)x^2 -4 (3+7\ln\pi)x +4-8 \ln\pi\bigr]\\
&\triangleq -\frac{1}{(x+1)^2 (x+2)}h_2(x)
\end{align*}
and
\begin{align*}
  h_2'(x)&=6  (4 \ln\pi-15)x^5+20 (7 \ln\pi-20) x^4+8  (32 \ln\pi-59)x^3\\
  &\quad+36 (4 \ln\pi-3)x^2+2  (13-12 \ln\pi)x-4 (3+7 \ln\pi),\\
  h_2''(x)&=30 (4 \ln\pi-15)x^4 +80  (7 \ln\pi-20)x^3\\
  &\quad+24 (32 \ln\pi-59) x^2+72 (4 \ln\pi-3)x+2 (13-12 \ln\pi),\\
  h_2'''(x)&=120 (4 \ln\pi-15)x^3 +240 (7 \ln\pi-20) x^2\\
  &\quad+48  (32 \ln\pi-59)x+72 (4 \ln\pi-3).
\end{align*}
By Lemma~\ref{0<>0}, it follows that $h_3''(x)$ is negative on $[1,\infty)$, so $h_2''(x)$ is decreasing and $h_2'(x)$ is concave on $[1,\infty)$. Since
\begin{align*}
h_2''(1)&=2 (13-12 \ln\pi)+30 (4 \ln\pi-15)+72 (4 \ln\pi-3)\\
&\quad+80 (7 \ln\pi-20)+24 (32 \ln\pi-59)\\
   &=-1696.22\dotsm,
\end{align*}
the derivative $h_2''(x)$ is negative, and thus $h_2(x)$ is concave and $h_2'(x)$ is decreasing, on $[1,\infty)$. From
\begin{align*}
  h_2'(1)&=2 (13-12 \ln\pi)+6 (4 \ln\pi-15)+36 (4 \ln\pi-3)\\
  &\quad+20 (7 \ln\pi-20)-4 (3+7 \ln\pi)+8 (32 \ln\pi-59)\\
  &=-469.89\dotsm,
\end{align*}
it is immediately deduced that $h_2'(x)$ is negative and the function $h_2(x)$ is decreasing on $[1,\infty)$. Due to
\begin{align*}
  h_2(1)&=2-16 \ln\pi+12 (4 \ln\pi-3)+4 (7 \ln\pi-20)\\
  &\quad-4 (3+7 \ln\pi)+2(32 \ln\pi-59)\\
  &=-134.10\dotsm,
\end{align*}
we derive that the function $h_2(x)$ is negative on $(1,\infty)$, so $h_1'(x)>0$ and $h_1(x)$ is increasing on $(1,\infty)$. From
\begin{equation*}
  h_1(1)=8 \biggl(\frac{\pi ^2}{6}-1\biggr)-4 (1-\gamma )+4 \ln\pi=8.04\dotsm,
\end{equation*}
it follows that the function $h_1(x)$ is positive on $(1,\infty)$, and thus the derivative $h'(x)$ is negative and $h(x)$ is decreasing on $(1,\infty)$. Since $h(1)=-\ln\pi=-1.14\dotsm$, it follows that the function $h(x)$ is negative, that the function $g(x)$ is negative, and that the derivative $[\ln G(x)]'$ is negative on $(1,\infty)$. As a result, the function $G(x)$ is strictly decreasing on $(1,\infty)$.
\par
It is clear that the sequence \eqref{omega-ln-x-2-x} equals $G\bigl(\frac{n}2\bigr)$, so the sequence \eqref{omega-ln-x-2-x} decreases for $n>2$. The proof of Theorem~\ref{mon-unit-ball-thm} is complete.

\section{Remarks}

\begin{rem}
In~\cite[Lemma~2.40]{AVV-Expos-89}, it was proved that the sequence $\Omega_n^{1/n}$ decreases strictly to $0$ as $n\to\infty$, that the series $\sum_{n=2}^\infty\Omega_n^{1/\ln n}$ is convergent, and that
\begin{equation}\label{n-to-infty-Omega-n}
\lim_{n\to\infty}\Omega_n^{1/(n\ln n)}=e^{-1/2}.
\end{equation}
\par
In~\cite[Corollary~3.1]{Anderson-Qiu-Proc-1997}, it was obtained that that the sequence $\Omega_n^{1/(n\ln n)}$ is strictly decreasing for $n\ge2$.
\par
In \cite{unit-ball.tex}, it was procured that the sequence $\Omega_{n}^{1/(n\ln n)}$ is strictly logarithmically convex for $n\ge2$.
\par
By L'Hospital rule, we have
\begin{equation*}
  \lim_{x\to\infty}\ln G(x)=\lim_{x\to\infty}\frac{(\ln\pi)-\psi(x+1)}{(x^2+2 x-1)/(x^3+x^2+x+1)}=-\infty,
\end{equation*}
hence,
$$
\lim_{x\to\infty}G(x)=\lim_{x\to\infty}\biggl[\frac{\pi^x}{\Gamma (x+1)}\biggr]^{{1}/{\ln\bigl(\frac{x^2+1}{x+1}\bigr)}}=0
$$
and the sequence \eqref{omega-ln-x-2-x} converges to $0$ as $n\to\infty$.
\end{rem}

\begin{rem}
We conjecture that the sequence \eqref{omega-ln-x-2-x} and the function \eqref{mon-unit-ball-G(x)} are both logarithmically convex on $(1,\infty)$.
\end{rem}

\end{document}